\documentclass[12pt]{amsart}
\usepackage{amsmath}
\usepackage{amsthm}
\usepackage{bbold}
\usepackage{bbm}
\usepackage{dsfont}
\newtheorem{thm}{Theorem}
\newtheorem{prop}{Proposition}

\newtheorem{lem}{Lemma}
\newtheorem{cor}{Corollary}
\newtheorem{defi}{Definition}
\numberwithin{equation}{section}
\newcommand{\nn}{\mathbb N}
\newcommand{\rr}{\mathbb R}

\DeclareMathOperator{\oc}{\xrightarrow[]{o}}
\DeclareMathOperator{\pc}{\xrightarrow[]{p}}
\newcommand{\st}{\hbox{\rm st}}

\newcommand{\fin}{\hbox{\rm fin}}
\newcommand{\Lfin}{\mathbf{fin}}
\newcommand{\Leta}{\mathbf{n}}
\newcommand{\Lopns}{\mathbf{o-pns}}
\newcommand{\opns}{\hbox{\rm o-pns}}

\begin{document}
	
\title{AMS Journal Sample}

\author{A. Ayd{\i}n$^{1,2}$, S. G. Gorokhova$^3$, H. G\"ul$^1$}

\address{$^{1}$ Middle East Technical University, Ankara, 06800 Turkey.} 
\email{{aabdulla@metu.edu.tr} and {e032209@metu.edu.tr}}

\address{$^{2}$ Mu\c{s} Alparslan University, Mu\c{s}, 49250, Turkey.}
\email{{a.aydin@alparslan.edu.tr}}

\address{$^{3}$ Sobolev Institute of Mathematics, Novosibirsk, 630090, Russia.}
\email{{lanagor71@gmail.com} and {lana@math.nsc.ru}}

\subjclass[2010]{46A40, 46B40, 46S20}
\date{14.12.2016}
	
\keywords{Vector lattice, ordered vector space, lattice-normed space, decomposable lattice norm, associated Banach-Kantorovich space, lattice-normed ordered vector space, nonstandard hull}
	
\title{Nonstandard Hulls of Lattice-Normed Ordered Vector Spaces}
	
\begin{abstract}
Nonstandard hulls of a vector lattice were introduced and studied in \cite{E10,E9,E7,E5,E3}. In recent paper \cite{EG}, these notions were extended to ordered vector spaces. 
In the present paper, following the construction of associated Banach-Kantorovich space \cite{E8}, we describe and investigate nonstandard hull of a lattice-normed space,
which is a generalization of nonstandard hull of a normed space \cite{Lux}. 
\end{abstract}
\maketitle	
	
\section{Introduction}

Nonstandard analysis provides a natural approach to various branches of functional analysis (see, for example, \cite{AHFL,HM,HL,E3,E6,E4,GE,E2,GKK,LW}). 
Luxemburg's construction of the nonstandard hull of a normed space (cf. \cite{Lux,HM,AHFL}) is one of the most important and elegant illustrations of the said approach. 
Recall that, given an internal normed space $(X,\|\cdot\|)$, an element $x\in X$ is called {\em infinitesimal} if $\|x\|\approx 0$ and {\em finite} if $\|x\|\le n$ for some $n\in\nn$. Denote the set of infinitesimal 
elements  and the set of finite elements  of $X$ by $\mu(X)$ and $\fin(X)$, respectively. Since $\mu(X)$ is a vector subspace of a vector space $\fin(X)$, we may define $\hat{X}$ to be a quotient 
$\fin(X)/\mu(X)$. Note that $\hat{X}$ is a real vector space and also a Banach space (cf. \cite[p.33]{HM}) under the norm defined by
$$
  \|[x]\|=\st\|x\| = \inf_{\rr}\{a\in\rr: \|x\|\le a\} \ \ \ (x\in \fin(X)).
  \eqno(1)
$$
In the case when $X={}^*Y$ for some standard normed space $Y=(Y,\|\cdot\|)$, the normed space $\hat{Y}:=\hat{{}^*Y}=(\hat{{}^*Y},\|\cdot\|)$ is called the {\em nonstandard hull} of $Y$. 
In the present paper, we develop the notion of nonstandard hull of a normed space further by generalizing it to the case of a lattice-normed space (abbreviated by LNS).  

In 1990's, Luxemburg's construction was extended to vector lattices (see \cite{E10,E9,E8}. Note that a vector lattice $E$ can be seen as the corresponding LNS
$(E,|\cdot|,E)$. Lattice-normed vector lattices (abbreviated by LNVLs) have attracted attention in \cite{E7,E5,E3,KK,K,AEEM}. 
The general theory of lattice-normed ordered vector spaces (abbreviated by LNOVSs) is still under investigation. The present paper contributes 
to the study of this theory by using nonstandard analysis, namely by using nonstandard hulls of LNOVSs normed by Dedekind complete vector lattices. 

The scheme of nonstandard analysis used below has been introduced by Luxemburg and Stroyan \cite{LS}. In our paper, we deal only with nonstandard enlargements satisfying the general saturation principle 
(such nonstandard enlargements are called {\em polysaturated} \cite[p.47]{AHFL}). Since the basic methods of nonstandard analysis are well-developed and presented in many textbooks (see, for example, \cite{Rob,Lux,HM,HL,AHFL,KK,GKK}), we refer the reader for corresponding notions and terminology to these standard sources. We also refer to \cite{Vul,SW,LZ,Za,K,AB,E3,AT} for theory of ordered vector spaces 
(abbreviated by OVSs and \cite{E7,E5,E3,E1,EG} for nonstandard hulls of vector latices and OVSs.

The structure of the paper is as follows. In Section 2, we include elementary theory of LNOVSs in parallel with theory of LNVLs (see, for example, \cite{K,AEEM}).  
In Section 3, we introduce and investigate the nonstandard hull of an LNS normed by an Dedekind complete vector lattice. This notion is closely related with 
the construction of an associated Banach-Kantorovich space \cite{E7,E5,E3}. 
In Section 4, we investigate nonstandard hulls of LNOVS. The main result here is Theorem \ref{thm 5}, that is, the nonstandard hull of a $p$-semimonotone LNOVS 
is also $p$-semimonotone with the same constant of semimonotonicity.

\section{Preliminaries}

In the present paper, all standard OVSs assumed to be real, Archime\-dean, and equipped with the generating positive cone \cite{AT}. 
We define and study certain necessary notions such as: $p$-normality and $op$-continuity of LNOVS, $p$-Levi spaces, etc. (see also \cite{AEEM} for their lattice versions)in this section. 

The following notions in lattice-normed vector spaces (abbreviated by LNSs) are motivated by their analogies in normed spaces. 

\begin{defi}[see also \cite{AEEM}]\label{def 1}
Given an LNS $(X,p,E)$ and $A,B\subseteq X$.\\ 
$(a)$ \ $A$ is said to be {\em $p$-dense in $B$} if, for any $b\in B$ and for any $0\ne u\in p(X)$, there is $a\in A$ such that $p(a-b)\le u$.\\
$(b)$ \ $A$ is said to be {\em $p$-closed} if, for any net $a_\alpha$ in $A$ such that $p(a_\alpha-x)\rightarrow 0$ in $X$ $($abbreviated by $z_\alpha\xrightarrow{p}x$$)$, 
it holds that $x\in A$.\\
$(c)$ \ $B$ is said to be the {\em $p$-closure} of $A$ if $B$ is the intersection of all $p$-closed subsets of $X$ containing $A$.
\end{defi}

In what follows, $X=(X,p,E)$ is an LNOVS. The next property is an analogy of the well-known property of normed OVSs. 
It is a direct extension of \cite[Prop.1]{AEEM} and it has a similar proof which is omitted.

\begin{prop}\label{prop 1}
Let the positive cone $X^+$ in an LNOVS $X=(X,p,E)$ be $p$-closed. Then any monotone $p$-convergent net in $X$ is $o$-convergent to its $p$-limit.
\end{prop}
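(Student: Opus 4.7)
The plan is to show that the $p$-limit $x$ of the monotone net coincides with its order supremum (in the increasing case), by using $p$-closedness of $X^+$ to pass positivity across $p$-limits. This mimics the well-known argument for normed vector lattices and, as the authors note, matches the lattice version in \cite{AEEM}.

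First, by replacing the net by its negative if necessary, I would assume the net is increasing, $x_\alpha \uparrow$ with $x_\alpha \pc x$. Recall that for a monotone net in an Archimedean OVS, $o$-convergence to $x$ is equivalent to $x = \sup_\alpha x_\alpha$, so the task reduces to establishing this identity.

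To see that $x$ is an upper bound, I would fix an index $\alpha_0$ and look at the tail $\{x_\alpha - x_{\alpha_0}\}_{\alpha \geq \alpha_0}$, which lies in $X^+$ by monotonicity. The identity
$$
p\bigl((x_\alpha - x_{\alpha_0}) - (x - x_{\alpha_0})\bigr) = p(x_\alpha - x) \oc 0
$$
shows that this tail $p$-converges to $x - x_{\alpha_0}$, so $p$-closedness of $X^+$ forces $x - x_{\alpha_0} \in X^+$, i.e.\ $x_{\alpha_0} \leq x$. For the least-upper-bound property, given any $y \in X$ with $y \geq x_\alpha$ for all $\alpha$, the net $\{y - x_\alpha\}$ lies in $X^+$ and satisfies $y - x_\alpha \pc y - x$; a second application of $p$-closedness gives $y - x \in X^+$, i.e.\ $y \geq x$. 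Hence $x = \sup_\alpha x_\alpha$, so $x_\alpha \uparrow x$, as required.

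I do not expect a real obstacle: both halves of the argument are the same single move, namely transporting $X^+$-membership through a $p$-convergent net. The only point requiring a line of care is the equivalence between $o$-convergence of a monotone net and convergence to its supremum, which is standard in the Archimedean OVS framework adopted in the Preliminaries.
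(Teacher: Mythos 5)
Your argument is correct and is essentially the proof the paper has in mind: the paper omits it, referring to \cite[Prop.1]{AEEM}, whose proof is exactly this two-step use of $p$-closedness of $X^{+}$ to show the $p$-limit is the supremum of the (increasing) net, followed by the trivial observation that $x_\alpha\uparrow x$ yields $x_\alpha\oc x$. No gaps; the Archimedean remark is not even needed for the direction you use.
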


We continue with further basic notions in LNOVSs, which are motivated by their analogies for vector lattices and for LNVLs (see \cite{KK,K} and \cite{AEEM}).

\begin{defi}\label{def 2}
$(a)$ A subset $A\subseteq X$ is {\em $p$-bounded} if there exists $e\in E$ such that $p(a)\leq e$ for all $a\in A$.

$(b)$ $X$ is {\em $p$-semimonotone} if there is $M\in\rr$ such that $0\le y\le x\in X$ implies $p(y)\le Mp(x)$.

$(c)$ $X$ is {\em $p$-normal} if $x_\alpha\le y_\alpha\le z_\alpha$ in $X$, $x_\alpha\xrightarrow{p}u$, and $z_\alpha\xrightarrow{p}u$
imply $y_\alpha\xrightarrow{p}u$.

$(d)$ $X$ is a {\em $p$-Levi-space} if every $p$-bounded increasing net in $X^{+}$ is $p$-convergent.

$(e)$ $X$ is {\em $op$-continuous} if $x_\alpha\oc 0$ implies that $x_\alpha\xrightarrow{p}0$.

$(f)$ $X$ is {\em $\sigma-op$-continuous} if $x_n\oc 0$ implies that $x_n\xrightarrow{p}0$.

$(g)$ A net $(x_\alpha)_{\alpha \in A}$ in $X$ is said to be {\em $p$-Cauchy} if $(x_\alpha-x_{\alpha'})_{(\alpha,\alpha')\in A\times A}\xrightarrow{p}0$.

$(h)$ $X$ is {\em $p$-complete} if every $p$-Cauchy net in $X$ is $p$-convergent.
\end{defi}

\begin{lem}\label{lemma 1}
Let $X$ be a $p$-semimonotone LNOVS with the semimonotonicity constant $M$ and $a\le x\le b$ in $X$. Then $p(x)\le 2(M+1)(p(a)\vee p(b))$.
\end{lem}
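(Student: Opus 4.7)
The plan is to reduce to the case of a positive element (to which $p$-semimonotonicity directly applies) by translating $x$ by $a$, and then reassemble using the triangle inequality for the lattice norm $p$. Concretely, from $a\le x\le b$ we obtain $0\le x-a\le b-a$, so the bound on positive-dominated elements provided by the definition of $p$-semimonotonicity can be invoked on $y:=x-a$ and $z:=b-a$.

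First I would write $p(x)\le p(x-a)+p(a)$ using subadditivity of the lattice norm. Next, since $0\le x-a\le b-a$, $p$-semimonotonicity with constant $M$ gives $p(x-a)\le M\,p(b-a)$, and subadditivity once more yields $p(b-a)\le p(b)+p(a)$. Combining these inequalities produces
\[
p(x)\ \le\ M\bigl(p(a)+p(b)\bigr)+p(a)\ \le\ (M+1)\bigl(p(a)+p(b)\bigr).
\]
Finally, bounding the sum by twice the maximum, $p(a)+p(b)\le 2\bigl(p(a)\vee p(b)\bigr)$, delivers the claimed estimate $p(x)\le 2(M+1)\bigl(p(a)\vee p(b)\bigr)$.

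There is essentially no obstacle here: the argument is a two-line manipulation relying only on the triangle inequality for $p$ and the defining property of $p$-semimonotonicity applied to the shifted interval $[0,b-a]$. The only small point worth stating is that all the inequalities take place in $E$, so ``$\vee$'' and sums of positive elements behave as expected since $E$ is a vector lattice (in particular, one uses $u+v\le 2(u\vee v)$ for $u,v\in E^+$). The constant $2(M+1)$ is not optimal with this argument — one actually gets $(M+1)$ times the sum — but it is stated in this form presumably for later convenience.
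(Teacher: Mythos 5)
Your argument is correct and is essentially identical to the paper's own proof: both shift by $a$ to get $0\le x-a\le b-a$, apply $p$-semimonotonicity to $x-a$, use subadditivity of $p$, and finish with $p(a)+p(b)\le 2(p(a)\vee p(b))$. Nothing to add.
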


\begin{proof} 
Since $a\le x\le b$, then $0\le x-a\le b-a$ and 
$$
  p(x)-p(a)\le p(x-a)\le Mp(b-a)\le M(p(b)+p(a)).
$$  
Hence
$$
  p(x)\le M(p(b)+p(a))+p(a)\le 
 (M+1)(p(b)+p(a))\le 2(M+1)(p(a)\vee p(b)). 
$$
\end{proof}

\begin{lem}\label{lemma 2}
Let $X$ be a $p$-semimonotone LNOVS and $\pm x_\alpha\le y_\alpha\pc 0$. Then $x_\alpha\pc 0$.
\end{lem}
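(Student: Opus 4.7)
The plan is to reduce Lemma 2 directly to Lemma 1. The hypothesis $\pm x_\alpha\le y_\alpha$ unpacks as the two inequalities $x_\alpha\le y_\alpha$ and $-x_\alpha\le y_\alpha$, the second of which rearranges to $-y_\alpha\le x_\alpha$. Thus we are in the exact setting of Lemma 1 with $a:=-y_\alpha$, $b:=y_\alpha$, and $x:=x_\alpha$, which gives
\[
  p(x_\alpha)\le 2(M+1)\bigl(p(-y_\alpha)\vee p(y_\alpha)\bigr).
\]

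Next, I would invoke the standard axiom of a vector (lattice) norm, $p(\lambda z)=|\lambda|p(z)$, to conclude $p(-y_\alpha)=p(y_\alpha)$. The displayed inequality then collapses to
\[
  0\le p(x_\alpha)\le 2(M+1)\,p(y_\alpha).
\]

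Finally, since by definition $y_\alpha\pc 0$ means $p(y_\alpha)\oc 0$ in $E$, and order convergence in the Dedekind complete vector lattice $E$ is preserved under nonnegative scalar multiples and under the sandwich $0\le p(x_\alpha)\le 2(M+1)p(y_\alpha)$, we conclude $p(x_\alpha)\oc 0$, i.e., $x_\alpha\pc 0$.

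There is no real obstacle here; the only subtlety worth flagging in the write-up is the symmetry $p(-y)=p(y)$, which is a standing axiom of the lattice norm and is what converts the $\vee$ in Lemma 1 into a single term $p(y_\alpha)$. Everything else is just the order-squeeze argument for nets in $E$.
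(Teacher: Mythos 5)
Your argument is correct and is essentially the paper's own proof: apply Lemma \ref{lemma 1} to $-y_\alpha\le x_\alpha\le y_\alpha$ to get $p(x_\alpha)\le 2(M+1)p(y_\alpha)$ (the paper silently uses $p(-y_\alpha)=p(y_\alpha)$ to collapse the supremum, which you make explicit), then pass to order convergence in $E$ by the squeeze. Nothing further is needed.
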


\begin{proof} 
By Lemma \ref{lemma 1}, $-y_\alpha\le x_\alpha\le y_\alpha$ implies $p(x_\alpha)\le 2(M+1)p(y_\alpha)$. Since $y_\alpha\pc 0$, then $2(M+1)p(y_\alpha)\oc 0$ and hence $p(x_\alpha)\oc 0$.
Thus, $x_\alpha\pc 0$.
\end{proof}

Definition \ref{def 2}$(c)$ is motivated by the property (cf. \cite[Thm.2.23]{AT}) of normal normed OVSs.
Note that, without lost of generality, one may suppose that, in Definition \ref{def 2}$(c)$, $u=0$ and $x_\alpha\equiv 0$. 
Therefore, Lemma \ref{lemma 2} ensures that any $p$-semimonotone LNOVS is $p$-normal (in particular, any LNVL is $p$-normal).
Thus, the $p$-normality coincides with the usual normality in a normed OVS $(X,p,E)=(X,\|\cdot\|)$. 
In this case $X$ is $p$-normal iff it is $p$-semimonotone (cf. \cite[Thm.IV.2.1]{Vul1}). 

It was established in \cite[Lm.2]{AEEM} that an LNVL $(X,p,E)$ is $op$-continuous iff 
$
  X\ni w_\beta\downarrow 0\Rightarrow w_\beta\pc 0.
$

In order to extend this result, we need the following lemma.

\begin{lem}\label{lemma 3}
Let an LNOVS $X=(X,p,E)$ be $p$-semimonotone and $w_{\beta}$ be a net in  $X$. If 
$ w_\beta\downarrow 0$ implies $w_\beta\pc 0$, then $X$ is $op$-continuous.
\end{lem}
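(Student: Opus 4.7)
Assume $x_\alpha \xrightarrow{o} 0$ in $X$. My plan is to extract from order convergence an order-dominating net, apply the $p$-semimonotone estimate of Lemma \ref{lemma 1} to control $p(x_\alpha)$ by the $p$-seminorms of the dominating net, and finally invoke the hypothesis to push everything to $0$ in the order of $E$.

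\textbf{Step 1 (Unpacking order convergence).} In an OVS, $x_\alpha \xrightarrow{o} 0$ means there exists a net $w_\beta \downarrow 0$ in $X$ such that for every $\beta$ there is an index $\alpha_0(\beta)$ with $-w_\beta \le x_\alpha \le w_\beta$ for all $\alpha \ge \alpha_0(\beta)$. By the hypothesis of the lemma, $w_\beta \xrightarrow{p} 0$, that is, $p(w_\beta) \xrightarrow{o} 0$ in $E$.

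\textbf{Step 2 (Semimonotone estimate).} For each $\beta$ and each $\alpha \ge \alpha_0(\beta)$, we have $-w_\beta \le x_\alpha \le w_\beta$. Since $p(-w_\beta) = p(w_\beta)$ (lattice-norms are absolutely homogeneous), Lemma \ref{lemma 1} gives
$$
p(x_\alpha) \le 2(M+1)\bigl(p(-w_\beta) \vee p(w_\beta)\bigr) = 2(M+1) p(w_\beta).
$$
(Alternatively one could write $0 \le x_\alpha + w_\beta \le 2 w_\beta$ and use $p$-semi\-monoton\-icity directly, together with the triangle inequality.)

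\textbf{Step 3 (Passing to order convergence in $E$).} Since $p(w_\beta) \xrightarrow{o} 0$, fix a net $v_\gamma \downarrow 0$ in $E$ and a map $\beta_0(\gamma)$ so that $p(w_\beta) \le v_\gamma$ for all $\beta \ge \beta_0(\gamma)$. For each $\gamma$, pick any $\beta(\gamma) \ge \beta_0(\gamma)$ and set $\alpha^\star(\gamma) := \alpha_0(\beta(\gamma))$. Then, combining Step~2 with the above,
$$
p(x_\alpha) \le 2(M+1) v_\gamma \qquad \text{for all } \alpha \ge \alpha^\star(\gamma).
$$
Since $2(M+1) v_\gamma \downarrow 0$ in $E$, this is exactly the statement that $p(x_\alpha) \xrightarrow{o} 0$, i.e., $x_\alpha \xrightarrow{p} 0$, proving that $X$ is $op$-continuous.

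\textbf{Where the care lies.} There is no genuine deep step; the only thing to watch is that the order-dominating net $w_\beta$ and the original net $x_\alpha$ are indexed by different sets, so one cannot literally apply Lemma \ref{lemma 2} (which requires a common index). The role of the $p$-semi\-monoton\-icity constant $M$ is confined to the quantitative comparison in Step~2, and the double-index reindexing in Step~3 is what converts the $p$-convergence of $w_\beta$ into $o$-convergence of $p(x_\alpha)$.
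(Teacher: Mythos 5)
Your overall strategy (bound $p(x_\alpha)$ via Lemma \ref{lemma 1} by the $p$-values of the dominating net, then reindex to obtain order convergence of $p(x_\alpha)$ in $E$) is the same as the paper's, but there is one genuine gap, in Step~1. You unpack $x_\alpha\oc 0$ as the existence of a \emph{single} net $w_\beta\downarrow 0$ in $X$ with $-w_\beta\le x_\alpha\le w_\beta$ eventually. In an LNOVS the underlying space $X$ is only an ordered vector space, not a lattice, and the notion of order convergence the paper works with (see its proofs of this lemma and of Theorem \ref{thm 7}) uses \emph{two} decreasing nets: $y_\beta\downarrow 0$ and $z_\gamma\downarrow 0$ such that for every pair $(\beta,\gamma)$ there is $\alpha_{\beta,\gamma}$ with $-y_\beta\le x_\alpha\le z_\gamma$ for all $\alpha\ge\alpha_{\beta,\gamma}$. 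Without lattice operations in $X$ you cannot merge $y_\beta$ and $z_\gamma$ into one symmetric dominating net, so your unpacking assumes a strictly stronger property of the net $x_\alpha$ than the definition supplies; as written, your argument only covers nets admitting a symmetric majorant. This is also precisely why Lemma \ref{lemma 1} is stated for an asymmetric sandwich $a\le x\le b$ rather than $-b\le x\le b$: it is tailored to the two-net situation.

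The repair is mechanical and leaves your Steps 2--3 essentially intact: apply Lemma \ref{lemma 1} to $-y_\beta\le x_\alpha\le z_\gamma$ to get $p(x_\alpha)\le 2(M+1)\bigl(p(y_\beta)\vee p(z_\gamma)\bigr)$ for $\alpha\ge\alpha_{\beta,\gamma}$; the supremum is formed in the norming lattice $E$, which \emph{is} a vector lattice, so it exists even though $X$ has no lattice structure. The hypothesis of the lemma applies separately to $y_\beta$ and $z_\gamma$, giving $p(y_\beta)\oc 0$ and $p(z_\gamma)\oc 0$, hence $p(y_\beta)\vee p(z_\gamma)\oc 0$ over the product index set $(\beta,\gamma)$, and then your reindexing argument from Step~3 (run over pairs $(\beta,\gamma)$ instead of $\beta$) yields $p(x_\alpha)\oc 0$, i.e.\ $x_\alpha\pc 0$. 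With that one change your proof coincides with the paper's.
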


\begin{proof} 
Let $x_\alpha\oc 0$. Then there are two nets $y_\beta\downarrow 0$ and $z_\gamma\downarrow 0$ in $X$ such that, for every $\beta$ and $\gamma$, there exists $\alpha_{\beta,\gamma}$ with
$$
  -y_\beta\le x_\alpha\le z_\gamma \ \ \ (\forall \alpha\ge \alpha_{\beta,\gamma}). 
$$
By Lemma \ref{lemma 1}, 
$$
  p(x_\alpha)\le 2(M+1)(p(y_\beta)\vee p(z_\gamma)) \ \ \ (\forall \alpha\ge \alpha_{\beta,\gamma}).
  \eqno(2)  
$$
By the assumption, $p(y_\beta)\oc 0$ and $p(z_\gamma)\oc 0$. Then $p(y_\beta)\vee p(z_\gamma) \oc 0$.
It follows from $(2)$ that $p(x_\alpha)\oc 0$. Therefore, $X$ is $op$-continuous.
\end{proof}

Hence we have the following result.
\begin{thm}\label{thm 1}
A $p$-semimonotone LNOVS $X=(X,p,E)$ is $op$-continuous iff, for any net $x_\alpha\in X$, the condition $x_\alpha\downarrow 0$ implies $x_\alpha\pc 0$.	
\end{thm}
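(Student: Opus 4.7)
The plan is to prove the two directions of the biconditional separately, noting that one direction is essentially Lemma \ref{lemma 3} and the other is nearly immediate from the definition of $op$-continuity.

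For the forward implication, I would assume $X$ is $op$-continuous and take any net $x_\alpha\downarrow 0$ in $X$. By definition of order convergence (taking the witness nets to be $y_\beta=x_\beta\downarrow 0$ and $z_\gamma=0$, or simply recalling that any net decreasing to $0$ is order convergent to $0$), we have $x_\alpha\xrightarrow{o}0$. The $op$-continuity hypothesis (Definition \ref{def 2}$(e)$) then immediately yields $x_\alpha\xrightarrow{p}0$. This direction does not require $p$-semimonotonicity.

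For the reverse implication, I would simply invoke Lemma \ref{lemma 3}: under the standing hypothesis that $X$ is $p$-semimonotone, the condition ``$w_\beta\downarrow 0 \Rightarrow w_\beta\xrightarrow{p}0$'' is precisely the premise of that lemma, and its conclusion is that $X$ is $op$-continuous. Hence nothing new needs to be established.

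Since both directions are already present in the preceding material, I do not anticipate any genuine obstacle; the theorem is a clean packaging of Lemma \ref{lemma 3} together with the trivial observation that $x_\alpha\downarrow 0$ entails $x_\alpha\xrightarrow{o}0$. The only thing worth flagging is that $p$-semimonotonicity is used only in the nontrivial direction (via Lemma \ref{lemma 1} inside the proof of Lemma \ref{lemma 3}), which is why the statement assumes it globally.
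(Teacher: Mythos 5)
Your proof is correct and coincides with the paper's own (implicit) argument: the reverse direction is exactly Lemma~\ref{lemma 3}, and the forward direction is the trivial observation that $x_\alpha\downarrow 0$ gives $x_\alpha\oc 0$, so $op$-continuity yields $x_\alpha\pc 0$. One small slip in your parenthetical: the witness nets should be taken as $y_\beta\equiv 0$ and $z_\gamma=x_\gamma$ (since $x_\alpha\ge 0$ and $x_\alpha\le x_\gamma$ for $\alpha\ge\gamma$), not the other way around, though your fallback remark that any net decreasing to $0$ is order convergent to $0$ is exactly right.
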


Clearly, the $op$-continuity in LNOVSs is equivalent to the order continuity in the sense of \cite[2.1.4, p.48]{K}. 
For $p$-complete LNOVS, we have more conditions for $op$-continuity (see also \cite[Thm.1]{AEEM} for the LNVL-case.).

\begin{thm}\label{thm 2}
Let an LNOVS $X=(X,p,E)$ be $p$-complete and $p$-semimonotone. The following conditions are equivalent$:$
	
$(i)$ \ \ $X$ is {\em $op$-continuous}$;$
	
$(ii)$ \ if $0\leq x_\alpha\uparrow\leq x$ holds in X, then $x_\alpha$ is a $p$-Cauchy net$;$
	
$(iii)$ $x_\alpha\downarrow 0$ in X implies $x_\alpha\pc 0$.	
\end{thm}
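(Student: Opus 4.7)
The plan is to establish the cycle $(i) \Rightarrow (ii) \Rightarrow (iii) \Rightarrow (i)$, where the last arrow is already available from Theorem~\ref{thm 1} (which needed only the $p$-semimonotonicity assumption). The two new implications will both lean on $p$-completeness together with the order-bracket estimate of Lemma~\ref{lemma 1}.

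For $(ii) \Rightarrow (iii)$ I start from $x_\alpha \downarrow 0$, fix an index $\alpha_0$, and consider $y_\alpha := x_{\alpha_0} - x_\alpha$ for $\alpha \ge \alpha_0$. Then $0 \le y_\alpha \uparrow \le x_{\alpha_0}$, so hypothesis $(ii)$ makes $(y_\alpha)$ a $p$-Cauchy net, and hence so is $(x_\alpha)$. By $p$-completeness, $x_\alpha \pc z$ for some $z \in X$. To identify $z = 0$, I apply Proposition~\ref{prop 1} (whose hypothesis that $X^+$ is $p$-closed is a standing one in this context): the monotone $p$-convergent net $(x_\alpha)$ is $o$-convergent to $z$, while $x_\alpha \downarrow 0$ also gives $x_\alpha \oc 0$, and uniqueness of the order limit forces $z = 0$.

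For $(i) \Rightarrow (ii)$, given $0 \le x_\alpha \uparrow \le x$, the bracket $-(x - x_{\alpha_0}) \le x_\alpha - x_{\alpha'} \le x - x_{\alpha_0}$ for $\alpha, \alpha' \ge \alpha_0$, combined with Lemma~\ref{lemma 1}, yields
$$
  p(x_\alpha - x_{\alpha'}) \le 2(M+1)\, p(x - x_{\alpha_0}).
$$
Thus $(x_\alpha)$ will be $p$-Cauchy provided $p(x - x_{\alpha_0}) \oc 0$ in $E$. Since $(x - x_{\alpha_0})_{\alpha_0}$ is a decreasing net in $X^+$, Theorem~\ref{thm 1} supplies this as soon as we know $(x - x_{\alpha_0}) \downarrow 0$ in $X$.

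The main obstacle is exactly this last step: the element $x$ is only an upper bound of $(x_\alpha)$ and not necessarily the supremum, so the decreasing net $(x - x_{\alpha_0})$ may have a strictly positive infimum. The natural remedy is to replace $x$ by the order supremum $x^{\ast}$ of $(x_\alpha)$, obtained by using $p$-completeness on the (already Cauchy) net to produce a $p$-limit and then invoking Proposition~\ref{prop 1} to recognize this $p$-limit as the supremum; then $(x^{\ast} - x_{\alpha_0}) \downarrow 0$ and the displayed estimate closes. The delicate point is to avoid circularity, since producing the $p$-limit uses the Cauchy property we are trying to prove, so the order of operations must be arranged carefully—likely through a preliminary double-net estimate in the spirit of Lemma~\ref{lemma 2}.
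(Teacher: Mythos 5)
The cycle $(i)\Rightarrow(ii)\Rightarrow(iii)\Rightarrow(i)$ is indeed the skeleton of the proof the paper points to (the theorem is stated with proof omitted, ``similar to \cite[Thm.1]{AEEM}''), and $(iii)\Rightarrow(i)$ via Theorem \ref{thm 1} is fine; but the two implications you actually had to supply are not closed. In $(i)\Rightarrow(ii)$ you stop exactly where the real work begins: you need $p(x-x_{\alpha_0})\oc 0$, which fails because $x$ need not be $\sup_\alpha x_\alpha$, and your proposed repair (manufacture the supremum from $p$-completeness) is circular, as you yourself admit. The missing device in \cite{AEEM} is to run over \emph{all} upper bounds rather than to produce a supremum: let $B=\{b\in X:\ x_\alpha\le b\ \text{for all}\ \alpha\}$ and consider the net $(b-x_\alpha)$ indexed by $A\times B$. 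If $w\le b-x_\alpha$ for all $\alpha$ and all $b\in B$, then $b-w\in B$, hence $b-nw\in B$ for every $n\in\nn$, so $nw\le b-x_{\alpha_0}$ and the Archimedean property forces $w\le 0$; thus $(b-x_\alpha)\downarrow 0$. Now $op$-continuity gives $p(b-x_\alpha)\oc 0$, and the ordinary triangle inequality $p(x_{\alpha'}-x_{\alpha''})\le p(b-x_{\alpha'})+p(b-x_{\alpha''})$ yields the $p$-Cauchy property — no supremum, no $p$-completeness, and not even Lemma \ref{lemma 1} is needed for this arrow.

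In $(ii)\Rightarrow(iii)$ your reduction (fix $\alpha_0$, pass to $x_{\alpha_0}-x_\alpha$, use $p$-completeness to get $x_\alpha\pc z$) is the right one, but the identification $z=0$ is the genuinely delicate step, and you dispose of it by invoking Proposition \ref{prop 1} under the claim that $p$-closedness of $X^+$ is ``a standing'' hypothesis. It is not: Theorem \ref{thm 2} assumes only $p$-completeness and $p$-semimonotonicity, and Proposition \ref{prop 1} explicitly carries the extra hypothesis that $X^+$ is $p$-closed. In the LNVL model this step is unconditional because one can argue with lattice operations and monotonicity of the lattice norm (for $\alpha\ge\beta$, $(z-x_\beta)^+\le(z-x_\alpha)^+\le|z-x_\alpha|$, hence $p\bigl((z-x_\beta)^+\bigr)=0$, so $z\le x_\beta$, and similarly for lower bounds), but these tools are unavailable in a general LNOVS. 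So at this point you must either honestly add the $p$-closedness of $X^+$ to the hypotheses (changing the statement) or supply a substitute argument; declaring it a standing assumption does neither, and this is a genuine gap in the proposal as written.
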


The proof is similar with the proof of \cite[Thm.1]{AEEM}, and we omit it.

The following two results generalizes \cite[Cor.1]{AEEM}, \cite[Cor.2]{AEEM}, and \cite[Prop.2]{AEEM} respectively. 

\begin{thm}\label{thm 3}
Let an LNOVS $(X,p,E)$ be $op$-continuous, $p$-complete, and $p$-semimonotone. Then $X$ is Dedekind complete. 
\end{thm}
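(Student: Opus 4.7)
The plan is to prove Dedekind completeness of $X$ by showing that every order bounded increasing net $0\le x_\alpha \uparrow \le x$ in $X$ admits a supremum, using the characterizations of Theorem \ref{thm 2} and the order/$p$-convergence link of Proposition \ref{prop 1}. Since $X$ is $op$-continuous, $p$-semimonotone, and $p$-complete, the implication $(i)\Rightarrow (ii)$ of Theorem \ref{thm 2} gives that $(x_\alpha)$ is $p$-Cauchy, and $p$-completeness then produces $y \in X$ with $x_\alpha \pc y$.

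The next step is to identify $y$ with $\sup_\alpha x_\alpha$. The cleanest route is to apply Proposition \ref{prop 1} to the monotone $p$-convergent net $(x_\alpha)$, which (provided $X^+$ is $p$-closed) upgrades $p$-convergence to $o$-convergence $x_\alpha \oc y$; standard properties of order limits of monotone nets then yield $y = \sup_\alpha x_\alpha$. Alternatively one argues directly: for each fixed $\alpha_0$ the tail $(x_\alpha - x_{\alpha_0})_{\alpha \ge \alpha_0}$ sits in $X^+$ and $p$-converges to $y - x_{\alpha_0}$, so $p$-closedness of $X^+$ forces $y \ge x_{\alpha_0}$; for any other upper bound $z$, the net $z - x_\alpha \in X^+$ $p$-converges to $z - y$, giving $z \ge y$ by the same cone-closedness. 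Thus $y = \sup_\alpha x_\alpha$, and standard reductions handle an arbitrary order bounded subset.

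The main obstacle is the $p$-closedness of $X^+$, which is what Proposition \ref{prop 1} needs as a hypothesis but is not explicitly listed among the assumptions of the theorem. In the LNVL setting of \cite[Cor.1]{AEEM} this closedness is automatic from the lattice structure, but here it requires a separate argument modeled on the classical normed OVS case: given $v_\alpha \in X^+$ with $v_\alpha \pc v$, the goal $v\ge 0$ should be obtained by sandwiching the ``negative part'' of $v$ by $p$-null positive quantities (using Lemma \ref{lemma 2} and $p$-semimonotonicity) and invoking the Archimedean property of $X$. This is the technical core that I expect to require the most care; once it is in place, the rest of the argument is a routine combination of Theorem \ref{thm 2}, $p$-completeness, and Proposition \ref{prop 1}.
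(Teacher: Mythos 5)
Your proposal follows exactly the paper's route: Theorem \ref{thm 2}$(i)\Rightarrow(ii)$ makes the net $0\le x_\alpha\uparrow\le u$ a $p$-Cauchy net, $p$-completeness gives a $p$-limit, and Proposition \ref{prop 1} identifies that limit with the supremum; the paper's own proof is precisely these three steps and nothing more.

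The one point where you go beyond the paper is the $p$-closedness of $X^+$. You are right that Proposition \ref{prop 1} needs it and that it is not listed among the hypotheses of Theorem \ref{thm 3}; the paper simply applies Proposition \ref{prop 1} without comment, whereas Theorem \ref{thm 4} and Proposition \ref{prop 2} do state ``with $p$-closed $X^+$'' explicitly, so your concern points at an imprecision in the paper rather than at a defect of your own argument. However, the repair you sketch does not work as written: $X$ is only an ordered vector space, so there is no ``negative part'' of $v$ to sandwich, and Lemma \ref{lemma 2} together with semimonotonicity gives you control of $p$ on order intervals but no mechanism for producing the order inequality $v\ge 0$ from $p$-convergence --- that is exactly what cone closedness is for, and it does not follow from $op$-continuity, $p$-completeness and $p$-semimonotonicity by any routine argument (the paper treats it as an independent hypothesis elsewhere). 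The clean way to state the result is to add $p$-closedness of $X^+$ to the hypotheses (or to note that it is being assumed implicitly), after which your second, direct argument --- tails $x_\alpha-x_{\alpha_0}\in X^+$ $p$-converge to $y-x_{\alpha_0}$, and $z-x_\alpha\in X^+$ $p$-converges to $z-y$ for any upper bound $z$ --- is a perfectly good substitute for citing Proposition \ref{prop 1}. Also note that for Dedekind completeness in the OVS sense one only needs suprema of bounded increasing nets (equivalently, of upward directed bounded sets), so no further ``standard reductions'' for arbitrary order bounded subsets are required beyond directing the set upward, which in a non-lattice $X$ is where one should be careful not to promise more than the increasing-net case delivers.
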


\begin{proof}
Assume $0\leq x_\alpha\uparrow\leq u$ then, by Theorem \ref{thm 2}(ii), $x_\alpha$ is a $p$-Cauchy net and, since $X$ is $p$-complete, there exists $x$ such that $x_\alpha\xrightarrow{p}x$. 
It follows from Proposition \ref{prop 1}, that $x_\alpha\uparrow x$, and so $X$ is Dedekind complete.
\end{proof}

\begin{thm}\label{thm 4}
Any $p$-semimonotone $p$-Levi LNOVS $(X,p,E)$ with $p$-closed $X^+$ is $op$-continuous. 
\end{thm}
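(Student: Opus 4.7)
My plan is to reduce to the criterion of Theorem \ref{thm 1}: since $X$ is $p$-semimonotone, it suffices to show that any net with $x_\alpha\downarrow 0$ satisfies $x_\alpha\pc 0$. Once we have such a reduction, the strategy is to ``reverse'' the decreasing net into an increasing one so that the $p$-Levi property can be invoked, and then transfer the resulting $p$-limit back to the original net and identify it as $0$ via order convergence.

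Concretely, fix $x_\alpha\downarrow 0$ and pick some index $\alpha_0$; on the tail $\alpha\ge\alpha_0$ we have $0\le x_\alpha\le x_{\alpha_0}$. Define $y_\alpha:=x_{\alpha_0}-x_\alpha\ge 0$ for $\alpha\ge\alpha_0$. Then $y_\alpha\uparrow$ because $x_\alpha$ is decreasing, and $0\le y_\alpha\le x_{\alpha_0}$. By $p$-semimonotonicity with constant $M$, we obtain $p(y_\alpha)\le Mp(x_{\alpha_0})$, so $\{y_\alpha\}$ is a $p$-bounded increasing net in $X^+$. The $p$-Levi hypothesis then supplies some $y\in X$ with $y_\alpha\pc y$, and therefore $x_\alpha=x_{\alpha_0}-y_\alpha\pc x_{\alpha_0}-y$.

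At this point $x_\alpha$ is a decreasing (hence monotone) $p$-convergent net in $X$, and $X^+$ is $p$-closed by assumption. Proposition \ref{prop 1} then gives $x_\alpha\oc x_{\alpha_0}-y$. But by hypothesis $x_\alpha\downarrow 0$ already gives $x_\alpha\oc 0$, and by uniqueness of order limits we conclude $x_{\alpha_0}-y=0$. Hence $x_\alpha\pc 0$, and Theorem \ref{thm 1} finishes the job.

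The only delicate point is ensuring that the increasing companion net $y_\alpha$ is genuinely $p$-bounded so that $p$-Levi applies — this is precisely where $p$-semimonotonicity is essential, and without it the argument collapses. Everything else (transferring a $p$-limit through the translation $x_\alpha=x_{\alpha_0}-y_\alpha$ and identifying the limit via Proposition \ref{prop 1}) is formal.
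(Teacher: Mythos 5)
Your argument is correct and is essentially the proof the paper has in mind: the paper omits it, referring to the analogous proof of \cite[Cor.2]{AEEM}, and your reduction via Theorem \ref{thm 1}, the reversal $y_\alpha=x_{\alpha_0}-x_\alpha$ made $p$-bounded by $p$-semimonotonicity so that the $p$-Levi property applies, and the identification of the limit through Proposition \ref{prop 1} (using the $p$-closedness of $X^+$) is exactly that adaptation to the LNOVS setting.
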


The proof is similar with the proof of \cite[Cor.2]{AEEM} and therefore it is omitted.

\begin{prop}\label{prop 2}
Any $p$-semimonotone $p$-Levi LNOVS $(X,p,E)$ with $p$-closed $X^+$ is Dedekind complete.
\end{prop}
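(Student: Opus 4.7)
The plan is to bypass Theorems 3 and 4 and give a direct argument, since the hypotheses of Proposition 2 already supply almost everything needed to produce suprema of order-bounded increasing nets. To establish Dedekind completeness, I would take an arbitrary net $0 \le x_\alpha \uparrow \le u$ in $X$ and exhibit its supremum.

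First I would observe that the net $(x_\alpha)$ is $p$-bounded. Indeed, from $0 \le x_\alpha \le u$ and the $p$-semimonotonicity of $X$ (with constant $M$), we get $p(x_\alpha) \le Mp(u)$ for all $\alpha$, so $Mp(u)$ is a common bound in $E$. The net is also increasing and lies in $X^+$, so the $p$-Levi hypothesis applies and yields some $x \in X$ with $x_\alpha \xrightarrow{p} x$.

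Next I would invoke Proposition \ref{prop 1}: since $X^+$ is $p$-closed, any monotone $p$-convergent net in $X$ is $o$-convergent to its $p$-limit. Applied to our increasing $(x_\alpha)$, this gives $x_\alpha \xrightarrow{o} x$, and for an increasing net this is equivalent to $x_\alpha \uparrow x$. Hence $x = \sup_\alpha x_\alpha$ exists in $X$. Since every order-bounded increasing net admits a supremum, $X$ is Dedekind complete.

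I do not expect any serious obstacle here: the two real inputs are packaging the $p$-boundedness (which needs only $p$-semimonotonicity) and promoting $p$-convergence to order convergence via Proposition \ref{prop 1}, both of which are immediate from the hypotheses. The only subtlety to watch is making sure the supremum statement is deduced cleanly from $o$-convergence of a monotone net — a standard fact that uses nothing beyond the definition of order convergence.
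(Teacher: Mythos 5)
Your argument is correct and coincides with the paper's own proof: $p$-semimonotonicity gives $p(x_\alpha)\le Mp(u)$, the $p$-Levi property yields $x_\alpha\xrightarrow{p}x$, and Proposition \ref{prop 1} (via $p$-closedness of $X^+$) upgrades this to $x_\alpha\uparrow x$. No differences worth noting.
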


\begin{proof}
Let $0\leq x_\alpha\uparrow\leq z\in X$. Then $p(x_\alpha)\leq Mp(z)$. Hence the net $x_\alpha$ is $p$-bounded and therefore, $x_\alpha\xrightarrow{p}x$ for some $x\in X$. 
By Proposition \ref{prop 1}, $x_\alpha\uparrow x$.
\end{proof}

\section{Nonstandard hulls of LNSs and of dominated operators acting between them}

{\em Order}- and {\em regular-nonstandard hulls} of LNSs were introduced in \cite{E7} 
as certain generalizations of Luxemburg's nonstandard hull of a normed space \cite{Lux}. Here, we employ a different approach for extending of Luxemburg's construction to LNSs.
In the rest of the paper, we suppose all LNSs to be normed by Dedekind complete vector lattices. To be certain, we fix an
Dedekind complete vector lattice $E$ for the norming lattice for all LNSs in what follows.
While considering an internal LNS $(\mathcal{X},p,\mathcal{E})$, we always assume that its norming lattice is standard, i.e. $\mathcal{E}={}^*E$. 

\subsection{Some external vector spaces associated with OVSs and LNSs}
We begin with several basic constructions from \cite{E7,E1,EG}. Let $Y$ be an OVS.
Consider the following external real vector subspaces of ${}^*Y$ \cite{EG}.
$$
  \fin({}^*Y):=\{\kappa\in {}^*Y: (\exists y\in Y) -y\le\kappa\le y\},
$$
$$
  \eta({}^*Y):=\{\kappa\in {}^*Y: \inf_Y\{y\in Y:-y\le\kappa\le y\}=0\},
$$
$$
  \opns({}^*Y):=\{\kappa\in {}^*Y: \inf_Y\{y'-y: Y\ni y\le \kappa \le y'\in Y\}=0\},
$$
and $\overline{Y}:=\fin({}^*Y)/\eta({}^*Y)$. 

Let $\mathcal{X}=(\mathcal{X},p,{}^*E)$ be an internal LNS. In accordance with \cite{E7,E5}, consider the following external subspaces of $\mathcal{X}$: 
$$
  \Lfin(\mathcal{X})=\{x\in\mathcal{X}: p(x)\in\fin({}^*E)\},
$$
$$
  \Leta(\mathcal{X})=\{x\in\mathcal{X}: p(x)\in\eta({}^*E)\},
$$
In the case of a standard LNS $X=(X,p,E)$,
$$
  \Lopns({}^*X)=\{\kappa\in{}^*X: \inf_E\{p(\kappa-x): x\in X\}=0\}.
$$
Remark that, similarly to the case in which $X=(X,p,E)$ is a normed space (cf. \cite[Prop.2.2.2.]{AHFL}), it can be easily shown that $X$ is $p$-complete iff $\Lopns({}^*X)=X+\Leta({}^*X)$.

\subsection{Nonstandard hull of an LNS}
For an internal LNS $\mathcal{X}=(\mathcal{X},p,{}^*E)$, consider the quotient $\overline{\mathcal{X}}:=\Lfin(\mathcal{X})/\Leta(\mathcal{X})$ 
and define the mapping $\overline{p}:\overline{\mathcal{X}}\to E$ by the following rule motivated by the formula (1) (see also \cite[Thm.2.3.5.]{E5} and \cite[3.1]{E7}):
$$
  \overline{p}([x]):=\inf\limits_{E}\{e\in E: e\ge p(x)\} \ \ \ (x\in\Lfin(\mathcal{X})).
  \eqno(1^*)
$$
It is easy to see that this mapping is a well defined $E$-valued norm on $\overline{\mathcal{X}}$. 

\begin{defi}\label{def 3}
Given an LNS $(X,p,E)$. The LNS $(\overline{{}^*X},\overline{p},E)$ is called {\em nonstandard hull} of $(X,p,E)$.
\end{defi}

According to \cite[Thm.3.5]{E7}, nonstandard hull of $(X,p,E)$ is a Banach-Kantorovich space, when $p$ is decomposable. 
Main reason for using of the term "nonstandard hull" lies in \cite[Thm.2.4.1.]{E5} (see also \cite[Thm.4.3]{E7}) saying that, in the case of decomposable LNS $(X,p,E)$, 
$p$-completion of $(X,p,E)$ can be obtained by natural embedding of $(X,p,E)$ into $(\overline{{}^*X},\overline{p},E)$, and then by just taking its $p$-closure there.

\subsection{Nonstandard hulls of dominated operators between decomposable LNSs}
Given two LNSs $(X,p,E)$ and $(Y,m,E)$. Let $T:X\to Y$ be a dominated operator (cf. \cite[4.4.1.]{K}). Under the assumption of decomposable $X$, $T$ has an exact dominant $\mathbf{I}T\mathbf{I}$
(cf. \cite[4.4.2.]{K}) and, in that case, the space $M(X,Y)$ can be considered as a decomposable LNS $(M(X,Y),\mathbf{I}\cdot\mathbf{I},L_b(E))$.    

Denote by $\mathcal{M}_n({}^*X,{}^*Y)$ the set of all internal linear operators from ${}^*X$ into ${}^*Y$ which admit standard order-continuous dominants,
that is: $T\in M_n({}^*X,{}^*Y)$ iff there is an operator $S\in L(E,F)$ satisfying ${}^*m(T\kappa)<{}^*S({}^*p(\kappa))$ for all $\kappa\in{}^*X$.
The following lemma was proved in \cite[Lm.2.4.2.]{E5}.

\begin{lem}\label{lemma 4}
For every operator $T\in\mathcal{M}_n({}^*X,{}^*Y)$, $T(\Lfin({}^*X))\subseteq\Lfin({}^*Y)$ and $T(\Leta({}^*X))\subseteq\Leta({}^*Y)$.
\end{lem}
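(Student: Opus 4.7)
The plan is to unwrap the hypothesis that $T\in\mathcal{M}_n({}^*X,{}^*Y)$ and show that both inclusions follow quickly from the existence of a standard, positive, order-continuous dominant for $T$. Concretely, fix a standard order-continuous positive operator $S\in L(E,F)$ with ${}^*m(T\kappa)\le{}^*S({}^*p(\kappa))$ for every $\kappa\in{}^*X$. The argument then splits into two parallel verifications that differ only in which property of $S$ is being used.

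For the $\Lfin$-inclusion, I would take $\kappa\in\Lfin({}^*X)$, pick a standard $e\in E^+$ with ${}^*p(\kappa)\le e$, and apply ${}^*S$ to both sides. Positivity of $S$, together with transfer and the domination inequality, gives ${}^*m(T\kappa)\le{}^*S({}^*p(\kappa))\le{}^*S(e)=S(e)\in F$, which is precisely the defining condition for $T\kappa\in\Lfin({}^*Y)$. The only thing to observe is that ${}^*S$ agrees with $S$ on standard elements, so the bound we produced is genuinely a standard element of $F$.

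For the $\Leta$-inclusion, let $\kappa\in\Leta({}^*X)$. The external set $D:=\{e\in E^+:{}^*p(\kappa)\le e\}$ is downward directed (closed under $\wedge$ in $E$) and satisfies $\inf_E D=0$ by the definition of $\eta({}^*E)$. Applying order continuity of $S$ to the standard decreasing net $(S(e))_{e\in D}$, we obtain $S(e)\downarrow 0$ in $F$. Combining this with $0\le{}^*m(T\kappa)\le{}^*S({}^*p(\kappa))\le S(e)$, valid for every $e\in D$, yields $\inf_F\{f\in F:{}^*m(T\kappa)\le f\}=0$, so $T\kappa\in\Leta({}^*Y)$. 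The only conceptual step worth flagging is precisely this bridge between the external infimum condition defining $\eta$ and the standard downward directed family on which order continuity of $S$ acts; once that identification is made, no further transfer-type bookkeeping is needed, and the two inclusions reduce to a line each.
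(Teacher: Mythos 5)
Your proof is correct. The paper itself does not prove Lemma~\ref{lemma 4} but only cites \cite[Lm.2.4.2.]{E5}, and your argument is the natural one that this reference carries out: monotonicity of the standard dominant $S$ (via transfer) handles the $\Lfin$-inclusion, while its order continuity, applied to the downward directed standard set $D=\{e\in E^{+}:{}^*p(\kappa)\le e\}$ with $\inf_E D=0$, handles the $\Leta$-inclusion; since ${}^*m(T\kappa)\ge 0$ and the standard bounds $S(e)$, $e\in D$, have infimum zero, the defining condition of $\eta({}^*E)$ (equivalently $\Leta({}^*Y)$) follows. Both steps are sound, including the bridge between the external infimum condition and the standard net on which order continuity acts, so nothing essential is missing.
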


Lemma \ref{lemma 4} ensures that for any operator $T\in \mathcal{M}_n({}^*X,{}^*Y)$, the rule 
$$
  \overline{T}([\kappa]):=[T\kappa] \ \ \ (\kappa\in\Lfin({}^*X)) 
$$
defines a mapping $\overline{T}:\overline{X}\to\overline{Y}$. By \cite[Thm.2.4.3.]{E5},
$\overline{T}$ is a linear dominated operator from $(\overline{{}^*X},\overline{p},E)$ into $(\overline{{}^*Y},\overline{m},E)$
with the least dominant $\mathbf{I}\overline{T}\mathbf{I}$ satisfying 
$$
  \mathbf{I}\overline{T}\mathbf{I}\le\inf\{S\in L_n(E):{}^*S\ge\mathbf{I}T\mathbf{I}\},
  \eqno(3)
$$
where $\mathbf{I}T\mathbf{I}$ is the least internal dominant of $T$. The operator $\overline{T}$ is said to be {\em nonstandard hull} of $T$. 
Since $T\in M_n(X,Y)$ iff ${}^*T\in \mathcal{M}_n({}^*X,{}^*Y)$, the inequality $(3)$ implies that $\mathbf{I}\overline{{}^*T}\mathbf{I}=\mathbf{I}T\mathbf{I}$
for any $T\in M_n(X,Y)$ (see also \cite[Thm.2.4.4.]{E5}).

\section{Nonstandard hulls of LNOVSs}

\subsection{Nonstandard hull of an LNOVS}
Let $\mathcal{Y}=(\mathcal{Y},p,{}^*E)$ be an internal $p$-semimonotone LNOVS with a finite constant $\mathcal{M}\in\fin(\rr)$ of the semimonotonicity. 
The key step is the following technical lemma.

\begin{lem}\label{lemma 5}
$\Leta(\mathcal{Y})$ is an order ideal in $\Lfin(\mathcal{Y})$.
\end{lem}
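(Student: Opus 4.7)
The space $\Leta(\mathcal{Y})$ has already been recorded as a vector subspace of $\Lfin(\mathcal{Y})$, so the real content is order-convexity: given $x,z\in\Leta(\mathcal{Y})$ and $y\in\Lfin(\mathcal{Y})$ with $x\le y\le z$, I want $y\in\Leta(\mathcal{Y})$. The plan is to push the problem down to the norming lattice ${}^*E$ and use that $\eta({}^*E)$ is a \emph{solid} subset of ${}^*E$.

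First, I would verify the auxiliary fact that $\eta({}^*E)$ is solid, i.e.\ if $a\in\eta({}^*E)$ and $b\in{}^*E$ with $|b|\le|a|$, then $b\in\eta({}^*E)$. This is immediate from the definition: the condition $-y\le a\le y$ is equivalent to $|a|\le y$ (in the vector lattice $E$), so any $y\in E$ witnessing $a\in\eta({}^*E)$ also witnesses $b\in\eta({}^*E)$, since $|b|\le|a|\le y$. Together with the earlier observation that $\eta({}^*E)$ is a real vector subspace (closed under addition and under multiplication by finite scalars, the latter because $|\alpha|\le n\in\nn$ whenever $\alpha\in\fin(\rr)$), this gives what I need.

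Next, given $x,z\in\Leta(\mathcal{Y})$ and $y\in\Lfin(\mathcal{Y})$ with $x\le y\le z$, translate to $0\le y-x\le z-x$. Since $\mathcal{Y}$ is internally $p$-semimonotone with constant $\mathcal{M}\in\fin(\rr)$, transferring Definition \ref{def 2}(b) yields
$$
  p(y-x)\le\mathcal{M}\,p(z-x).
$$
On the other hand, by the triangle inequality for the lattice norm, $p(z-x)\le p(z)+p(x)$, and the right-hand side lies in $\eta({}^*E)$ since $p(x),p(z)\in\eta({}^*E)$ and $\eta({}^*E)$ is a subspace. Solidity of $\eta({}^*E)$ then gives $p(z-x)\in\eta({}^*E)$, and closure under multiplication by $\mathcal{M}\in\fin(\rr)$ gives $\mathcal{M}\,p(z-x)\in\eta({}^*E)$. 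A final application of solidity to $0\le p(y-x)\le\mathcal{M}\,p(z-x)$ yields $p(y-x)\in\eta({}^*E)$, so $y-x\in\Leta(\mathcal{Y})$ and therefore $y=x+(y-x)\in\Leta(\mathcal{Y})$.

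The only subtle point is confirming solidity of $\eta({}^*E)$ in ${}^*E$; everything else is a routine combination of the triangle inequality for the lattice norm, transfer of $p$-semimonotonicity, and the fact that $\mathcal{M}$ is a finite (not merely internal) scalar, which is what allows the bound $\mathcal{M}\,p(z-x)$ to stay inside $\eta({}^*E)$.
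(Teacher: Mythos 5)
Your proof is correct and follows essentially the same route as the paper: reduce to order convexity, apply the semimonotonicity bound to the translated inequality $0\le y-x\le z-x$ (which is exactly the content of Lemma~\ref{lemma 1}, which the paper cites instead of rederiving), and conclude via solidity of $\eta({}^*E)$. The only cosmetic difference is how the finite constant is handled: you verify closure of $\eta({}^*E)$ under multiplication by finite scalars, while the paper dominates $2(\mathcal{M}+1)$ by the standard constant $2(\st(\mathcal{M})+2)$ and proceeds directly.
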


\begin{proof} 
Since $\Leta(\mathcal{Y})$ is a real vector subspace of $\Lfin(\mathcal{Y})$, it is enough to show that $\Leta(\mathcal{Y})$ is order convex.
Let $\xi\le\kappa\le\zeta$ with $\kappa\in\mathcal{Y}$, and $\xi,\zeta\in\Leta(\mathcal{Y})$. By Lemma \ref{lemma 1},
$$
  p(\kappa)\le 2(\mathcal{M}+1)(p(\xi)\vee p(\zeta))\le 2(\st(\mathcal{M})+2)(p(\xi)\vee p(\zeta))\in\eta({}^*E), 
$$
and hence $\kappa\in\Leta(\mathcal{Y})$. 
\end{proof}

\begin{thm}\label{thm 5}
Let $(\mathcal{Y},p,{}^*E)$ be a $p$-semimonotone LNOVS with a finite constant $\mathcal{M}$ of semimonotonicity.
Then $\Lfin(\mathcal{Y})/\Leta(\mathcal{Y})$ is an OVS. Moreover, the LNOVS $(\overline{\mathcal{Y}},\overline{p},E)$ 
is $p$-semimonotone with a constant $M=\st({\mathcal{M}})$ of semimonotonicity.
\end{thm}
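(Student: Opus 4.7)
The plan is to first equip the quotient $\overline{\mathcal{Y}}=\Lfin(\mathcal{Y})/\Leta(\mathcal{Y})$ with a vector-space order making it an OVS, and then to transfer $p$-semimonotonicity from the internal constant $\mathcal{M}$ to its standard part $M$.

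For the OVS structure, I declare $[\kappa]\ge 0$ iff $\kappa+\xi\ge 0$ for some $\xi\in\Leta(\mathcal{Y})$, so that the positive cone of $\overline{\mathcal{Y}}$ is the quotient image of $\mathcal{Y}^+\cap\Lfin(\mathcal{Y})$. Closure under addition and nonnegative scalar multiplication is automatic; the only nontrivial point is that the cone is proper. If $[\kappa]\ge 0$ and $-[\kappa]\ge 0$, there exist $\xi,\zeta\in\Leta(\mathcal{Y})$ with $-\zeta\le\kappa\le -\xi$, whence Lemma \ref{lemma 5} forces $\kappa\in\Leta(\mathcal{Y})$, i.e.\ $[\kappa]=0$. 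This is exactly where the order-ideal property of $\Leta(\mathcal{Y})$ is indispensable.

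For the semimonotonicity claim, given $0\le[\kappa_1]\le[\kappa_2]$ in $\overline{\mathcal{Y}}$, the first step is to upgrade to representatives satisfying the same inequality in $\mathcal{Y}$. Unwinding the definition of the order, I pick $\xi_1\in\Leta(\mathcal{Y})$ with $\kappa_1^\prime:=\kappa_1+\xi_1\ge 0$ and then $\xi_2\in\Leta(\mathcal{Y})$ with $\kappa_2^\prime:=\kappa_2+\xi_2\ge\kappa_1^\prime$. Both $\kappa_i^\prime$ still lie in $\Lfin(\mathcal{Y})$ and represent $[\kappa_i]$, so internal $p$-semimonotonicity applied to $0\le\kappa_1^\prime\le\kappa_2^\prime$ gives $p(\kappa_1^\prime)\le\mathcal{M}\,p(\kappa_2^\prime)$ in ${}^*E$. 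Since $\st(\mathcal{M})=M$, for every standard $\epsilon>0$ we have $\mathcal{M}\le M+\epsilon$, so for any $e\in E$ with $e\ge p(\kappa_2^\prime)$,
\[
p(\kappa_1^\prime)\le\mathcal{M}\,p(\kappa_2^\prime)\le(M+\epsilon)\,e,
\]
exhibiting $(M+\epsilon)e$ as a standard majorant of $p(\kappa_1^\prime)$. Invoking formula $(1^*)$ and taking the infimum in the Dedekind complete lattice $E$ over all such $e$ yields
\[
\overline{p}([\kappa_1])=\overline{p}([\kappa_1^\prime])\le(M+\epsilon)\,\overline{p}([\kappa_2^\prime])=(M+\epsilon)\,\overline{p}([\kappa_2]).
\]
Since $\overline{p}([\kappa_2])\ge 0$ and $E$ is Archimedean, letting $\epsilon\downarrow 0$ gives $\overline{p}([\kappa_1])\le M\,\overline{p}([\kappa_2])$, which is the desired semimonotonicity with constant $M$.

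The main subtlety I anticipate is the interplay between the quotient order and the internal order: inequalities in $\overline{\mathcal{Y}}$ hold only modulo $\Leta(\mathcal{Y})$, so without the order-ideal structure supplied by Lemma \ref{lemma 5} one cannot translate them into honest internal inequalities that feed into internal $p$-semimonotonicity. Once this representative upgrade is in place, the descent from the nonstandard constant $\mathcal{M}$ to its standard part is a routine use of the Archimedean property of $E$ combined with the infimum definition of $\overline{p}$.
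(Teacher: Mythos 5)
Your proposal is correct and follows essentially the same route as the paper: the quotient is made an OVS via the order-convexity of $\Leta(\mathcal{Y})$ (Lemma \ref{lemma 5}), the inequality $0\le[\kappa_1]\le[\kappa_2]$ is lifted to representatives with $0\le\kappa_1'\le\kappa_2'$ in $\mathcal{Y}$, internal $p$-semimonotonicity is applied, and the standard constant is recovered from formula $(1^*)$ by a standard-approximation argument. Your use of $\mathcal{M}\le M+\epsilon$ and $\epsilon\downarrow 0$ is just a slightly cleaner bookkeeping of the paper's chain with the constants $\frac{1}{\mathcal{M}}-\frac{1}{2n}$ and $\frac{1}{M}-\frac{1}{n}$, so no substantive difference.
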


\begin{proof} 
$\overline{\mathcal{Y}}=\Lfin(\mathcal{Y})/\Leta(\mathcal{Y})$ is an OVS, by Lemma \ref{lemma 4}. Now, let $0\le[\kappa]\le[\xi]\in\overline{\mathcal{Y}}$. By the definition of 
ordering in the quotient space $\Lfin(\mathcal{Y})/\Leta(\mathcal{Y})$ (cf. \cite[p.3]{E0}), we may assume that $0\le\kappa\le\xi$. Hence $\frac{1}{\mathcal{M}}p(\kappa)\le p(\xi)$ and then,
for any $n\in\nn$,
$$
  \overline{p}([\xi])=\inf\limits_{E}\{e\in E: e\ge p(\xi)\}\ge\inf\limits_{E}\{e\in E: e\ge {\mathcal{M}}^{-1}p(\kappa)\}\ge
$$
$$
  \inf\limits_{E}\bigl{\{}e\in E: e\ge\bigl{(}\frac{1}{\mathcal{M}}-\frac{1}{2n}\bigr{)}p(\kappa)\}\ge\inf\limits_{E}\bigl{\{}e\in E: e\ge\bigl{(}\frac{1}{M}-\frac{1}{n}\bigr{)}p(\kappa)\bigr{\}}=
$$  
$$
  \bigl{(}\frac{1}{M}-\frac{1}{n}\bigr{)}\inf\limits_{E}\{e\in E: e\ge p(\kappa)\}=\bigl{(}\frac{1}{M}-\frac{1}{n}\bigr{)}\overline{p}([\kappa]).
$$  
Since the inequality is true for all $n\in\nn$, we obtain $\overline{p}([\xi])\ge (\frac{1}{M}\overline{p}([\kappa]))$ or $\overline{p}([\kappa])\le M\overline{p}([\xi])$, as desired.
\end{proof}

\begin{cor}\label{cor 1}
Let $(Y,p,E)$ be a $p$-semimonotone LNOVS. Then $(\overline{{}^*Y},\overline{p},E)$ is also a $p$-semimonotone LNOVS with 
the same constant of semimonotonicity.
\end{cor}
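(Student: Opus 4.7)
The plan is to derive Corollary 1 as an immediate specialization of Theorem 5 via the transfer principle. The hypothesis that $(Y,p,E)$ is a standard $p$-semimonotone LNOVS with some constant of semimonotonicity $M\in\rr$ states that for all $y,z\in Y$, $0\le y\le z$ implies $p(y)\le Mp(z)$. Applying transfer, the internal LNOVS $({}^*Y,{}^*p,{}^*E)$ is $p$-semimonotone with constant ${}^*M$. Since $M$ is a standard real number, ${}^*M=M$, so in particular ${}^*M\in\fin(\rr)$ and $\st({}^*M)=M$. Thus ${}^*Y$ satisfies exactly the hypotheses of Theorem 5 with $\mathcal{M}=M$.

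Next I would observe that the standing convention from the beginning of Section 3 is respected: the norming lattice of ${}^*Y$ is ${}^*E$, where $E$ is the fixed standard Dedekind complete vector lattice. Consequently, the construction $\overline{{}^*Y}=\Lfin({}^*Y)/\Leta({}^*Y)$ is well defined and, by Lemma 5 and Theorem 5 applied to $\mathcal{Y}={}^*Y$, the quotient is an OVS and $(\overline{{}^*Y},\overline{p},E)$ is a $p$-semimonotone LNOVS with semimonotonicity constant $\st({}^*M)=\st(M)=M$. This is precisely the same constant $M$ that appears in the hypothesis on $(Y,p,E)$, completing the argument.

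I do not anticipate any real obstacle here, since the only nontrivial content lies in Theorem 5 itself; the corollary reduces to checking that the transferred parameter $\mathcal{M}={}^*M$ is not merely finite but actually standard, so that its standard part coincides with the original semimonotonicity constant. The only place one might pause is to confirm that the ambient ordering conventions (ordering on the quotient $\Lfin({}^*Y)/\Leta({}^*Y)$ inherited from ${}^*Y$) match those used in Theorem 5, but this is guaranteed by Lemma 5 together with the construction of $\overline{{}^*Y}$ given at the beginning of Section 4.
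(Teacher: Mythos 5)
Your proposal is correct and follows essentially the same route as the paper: transfer the semimonotonicity of $(Y,p,E)$ to $({}^*Y,{}^*p,{}^*E)$ with the standard (hence finite) constant $\mathcal{M}=M$, then apply Theorem \ref{thm 5} and note $\st(M)=M$. The extra remarks about the norming lattice and the quotient ordering are fine but add nothing beyond the paper's two-line argument.
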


\begin{proof} 
Let $M$ be a semimonotonicity constant of $(Y,p,E)$. By the transfer principle, ${\mathcal{M}}=M$ is a semimonotonicity constant of $(\overline{{}^*Y},\overline{p},E)$.
Now, apply Theorem \ref{thm 5}.
\end{proof}

\begin{cor}\label{cor 2}
Let $(Y,\|\cdot\|)$ be a normal OVS. Then its nonstandard hull $\overline{{}^*Y}$ is a normal Banach space.
\end{cor}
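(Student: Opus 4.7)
The plan is to recognize that Corollary~\ref{cor 2} is essentially Corollary~\ref{cor 1} specialized to the norming lattice $E=\rr$, together with the classical fact that Luxemburg's nonstandard hull of a normed space is complete.

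First I would view the normed OVS $(Y,\|\cdot\|)$ as an LNOVS $(Y,\|\cdot\|,\rr)$ with norming lattice $E=\rr$. Using the remark made just after Lemma~\ref{lemma 2} (i.e.\ the classical equivalence between normality and semimonotonicity for normed OVSs, as in \cite[Thm.IV.2.1]{Vul1}), normality of $Y$ gives that $(Y,\|\cdot\|,\rr)$ is $p$-semimonotone with some constant $M$. Corollary~\ref{cor 1} then applies directly and yields that the nonstandard hull $(\overline{{}^*Y},\overline{p},\rr)$ is $p$-semimonotone with the same constant~$M$.

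Next I would observe that, when $E=\rr$, we have $\fin({}^*\rr)=\fin({}^*Y)$-coefficients in $\rr$ and $\eta({}^*\rr)=\mu({}^*\rr)$, so the external spaces $\Lfin({}^*Y)$ and $\Leta({}^*Y)$ coincide with the classical sets $\fin({}^*Y)$ and $\mu({}^*Y)$ of Luxemburg, and the $E$-valued norm $\overline{p}$ defined by $(1^*)$ reduces to the real-valued norm $\st\|\cdot\|$ of formula~$(1)$. Therefore $(\overline{{}^*Y},\overline{p},\rr)$ is nothing but Luxemburg's nonstandard hull $\hat Y$, which is well known to be a Banach space (see \cite[p.33]{HM}). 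Finally, applying the same normality/semimonotonicity equivalence in the reverse direction to $\overline{{}^*Y}$, the $p$-semimonotonicity obtained above translates back into normality of the normed space $\overline{{}^*Y}$.

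Overall this is a short specialization argument, and I do not anticipate a genuine obstacle. The only point that needs care is verifying that for $E=\rr$ the abstract construction of Section~3.2 literally coincides with Luxemburg's classical construction, so that the word ``Banach'' is justified; but this is routine since $\inf_\rr\{a\in\rr:\|x\|\le a\}=\st\|x\|$ for $x\in\fin({}^*Y)$.
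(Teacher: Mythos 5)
Your proposal is correct and follows essentially the same route as the paper: translate normality of the normed OVS into semimonotonicity via \cite[Thm.IV.2.1]{Vul1} and then invoke Theorem \ref{thm 5} (your detour through Corollary \ref{cor 1} is the same argument, since that corollary is just Theorem \ref{thm 5} plus transfer). Your extra step identifying the $E=\rr$ construction with Luxemburg's classical hull, so that completeness of $\overline{{}^*Y}$ is justified, is a welcome explicit treatment of the ``Banach'' part, which the paper's one-line proof leaves implicit.
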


\begin{proof}
Notice that any OVS is normal iff it is semimonotone (cf. \cite[Thm.IV.2.1.]{Vul1}) and apply Theorem \ref{thm 5}.  
\end{proof}

\subsection{Internal LNVLs}
Here we consider some properties of LNS $(\overline{\mathcal{Y}},\overline{p},E)$ in the case where $(\mathcal{Y},p,{}^*E)$ is an internal LNVL. 

\begin{thm}\label{thm 6}
Let $(\mathcal{Y},p,{}^*E)$ be an internal LNVL. Then $(\overline{\mathcal{Y}},\overline{p},E)$ is also an LNVL. 
\end{thm}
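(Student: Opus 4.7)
The plan is to bootstrap the vector lattice structure on $\overline{\mathcal{Y}}$ from Theorem \ref{thm 5}. Since every LNVL is automatically $p$-semimonotone with semimonotonicity constant $1$, Theorem \ref{thm 5} applies with $\mathcal{M}=1$ and already gives $\overline{\mathcal{Y}}=\Lfin(\mathcal{Y})/\Leta(\mathcal{Y})$ the structure of an OVS on which $\overline{p}$ is monotone on the positive cone. What remains is to promote this OVS to a vector lattice and then verify the lattice-norm compatibility $|[x]|\le|[y]|\Rightarrow \overline{p}([x])\le\overline{p}([y])$.

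First I would show that $\Lfin(\mathcal{Y})$ is a vector sublattice of $\mathcal{Y}$. By transfer, $p(x)=p(|x|)$ holds in $\mathcal{Y}$, so $x\in\Lfin(\mathcal{Y})$ forces $|x|\in\Lfin(\mathcal{Y})$; combining this with the identities $x\vee y=\tfrac12(x+y+|x-y|)$ and $x\wedge y=\tfrac12(x+y-|x-y|)$ yields closure under the lattice operations. Lemma \ref{lemma 5} says that $\Leta(\mathcal{Y})$ is an order ideal in $\Lfin(\mathcal{Y})$; in the LNVL case this is in fact immediate, since $|y|\le|x|$ gives $p(y)\le p(x)$, and solidity of $\eta({}^*E)$ in $\fin({}^*E)$ then propagates $\eta$-smallness. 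Standard theory of quotients of vector lattices by order ideals equips $\overline{\mathcal{Y}}$ with lattice operations $[x]\vee[y]:=[x\vee y]$ and modulus $|[x]|:=[|x|]$, and a brief check confirms that this order coincides with the OVS order furnished by Theorem \ref{thm 5} (both amount to: $[x]\ge 0$ iff $x+z\ge 0$ for some $z\in\Leta(\mathcal{Y})$).

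To finish, I would verify the LNVL compatibility of $\overline{p}$. Using $p(|x|)=p(x)$ together with the defining formula $(1^*)$, one computes
\[
\overline{p}(|[x]|)=\overline{p}([|x|])=\inf\limits_{E}\{e\in E:e\ge p(|x|)\}=\inf\limits_{E}\{e\in E:e\ge p(x)\}=\overline{p}([x]).
\]
Combining this identity with the monotonicity of $\overline{p}$ on $\overline{\mathcal{Y}}^{+}$ (Theorem \ref{thm 5} with $M=1$) yields $|[x]|\le|[y]|\Rightarrow \overline{p}([x])=\overline{p}(|[x]|)\le\overline{p}(|[y]|)=\overline{p}([y])$, as required.

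The main obstacle is bookkeeping around well-definedness of $[x]\vee[y]$, namely showing that $x-x'\in\Leta(\mathcal{Y})$ and $y-y'\in\Leta(\mathcal{Y})$ imply $x\vee y-x'\vee y'\in\Leta(\mathcal{Y})$; the standard estimate $|x\vee y-x'\vee y'|\le|x-x'|+|y-y'|$ combined with the ideal property of $\Leta(\mathcal{Y})$ supplied by Lemma \ref{lemma 5} handles this cleanly. Beyond that, everything is a direct consequence of the LNVL axiom $p(x)=p(|x|)$ together with Theorem \ref{thm 5}.
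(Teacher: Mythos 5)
Your proposal is correct and follows essentially the same route as the paper: view $\overline{\mathcal{Y}}$ as the quotient of the vector lattice $\Lfin(\mathcal{Y})$ by the ideal $\Leta(\mathcal{Y})$, and transfer the semimonotonicity constant $\mathcal{M}=1$ through Theorem \ref{thm 5} to get the lattice-norm property of $\overline{p}$. You simply spell out details the paper leaves implicit (that $\Lfin(\mathcal{Y})$ is a sublattice, that $\Leta(\mathcal{Y})$ is in fact solid so the quotient lattice operations are well defined, and the identity $\overline{p}(|[x]|)=\overline{p}([x])$), which is a useful but not essentially different elaboration.
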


\begin{proof} 
Note that the quotient $\Lfin(\mathcal{Y})/\Leta(\mathcal{Y})$ of a vector lattice $\Lfin(\mathcal{Y})$ by an order ideal $\Leta(\mathcal{Y})$
is a vector lattice. Since $(\mathcal{Y},p,{}^*E)$ has a semimonotonicity constant $\mathcal{M}=1$ then, by Theorem \ref{thm 5}, the LNOVS 
$(\overline{\mathcal{Y}},\overline{p},E)$ has a semimonotonicity constant $M=1$, which means that $\overline{p}([\kappa])\le\overline{p}([\xi])$
for all $\kappa,\xi$ with $|\kappa|\le|\xi|$. Therefore $\overline{p}$ is a $E$-valued lattice norm on $\overline{\mathcal{Y}}$ and $(\overline{\mathcal{Y}},\overline{p},E)$ is an LNVL.    
\end{proof}

\noindent
The following proposition generalizes \cite[Prop.4.7]{HM} for LNVL.

\begin{prop}\label{prop 3}
Let $(\mathcal{Y},p,{}^*E)$ be an internal LNVL. Then the LNVL $(\overline{\mathcal{Y}},\overline{p},E)$ is $\sigma-op$-continuous, and
every monotone $p$-bounded sequence in $\overline{\mathcal{Y}}$ is order-bounded.
\end{prop}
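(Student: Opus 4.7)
The plan is to adapt, to the $E$-valued norm setting, the underspill argument Henson and Moore use in their proof of \cite[Prop.4.7]{HM}. The common saturation move for both assertions is: given a monotone sequence of classes in $\overline{\mathcal{Y}}$, pick monotone positive representatives $\xi_n\in\mathcal{Y}^+$ (available because $\overline{\mathcal{Y}}$ is a vector lattice by Theorem \ref{thm 6}, so one takes positive parts and then passes to running meets or joins in $\mathcal{Y}$), and use the polysaturation of the enlargement to extend $(\xi_n)_{n\in\nn}$ to an internal monotone sequence $(\tilde\xi_n)_{n\in{}^*\nn}$ in $\mathcal{Y}^+$ agreeing with the given one on standard indices.

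For $\sigma$-op-continuity, since $\overline{p}$ is a lattice norm on the vector lattice $\overline{\mathcal{Y}}$, any $[\xi_n]\oc 0$ is dominated in absolute value by some $[\zeta_n]\downarrow 0$, and lattice-norm monotonicity reduces the problem to showing that $[\zeta_n]\downarrow 0$ implies $\overline{p}([\zeta_n])\oc 0$ in $E$. The saturation move applied to $[\zeta_n]$ yields an internally decreasing $(\tilde\zeta_n)_{n\in{}^*\nn}$ in $\mathcal{Y}^+$. For any infinite $N$ and standard $n$, $\tilde\zeta_N\le\zeta_n$ in $\mathcal{Y}$, so $[\tilde\zeta_N]\le[\zeta_n]$ in $\overline{\mathcal{Y}}$; together with $[\tilde\zeta_N]\ge 0$ and $\inf_n[\zeta_n]=0$ this forces $[\tilde\zeta_N]=0$, i.e., $p(\tilde\zeta_N)\in\eta({}^*E)$. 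The decreasing sequence $e_n:=\overline{p}([\zeta_n])$ has an infimum $f\ge 0$ in $E$ by Dedekind completeness, and what remains is to show $f=0$; this last deduction is the main obstacle. My plan is to work with the internal infimum $g$ of $p(\tilde\zeta_n)$ over $n\in{}^*\nn$, which exists in ${}^*E$ by transferred Dedekind completeness: since $0\le g\le p(\tilde\zeta_N)\in\eta({}^*E)$ for any infinite $N$ and $\eta({}^*E)$ is an order ideal in ${}^*E$, one gets $g\in\eta({}^*E)$, so $\inf_E\{e\in E:e\ge g\}=0$. To derive $f=0$ I would argue by contradiction: if $f>0$ then $f$ is a lower bound of each set $\{e\in E:e\ge p(\tilde\zeta_n)\}$ for standard $n$, and I expect to play this off against the $\eta$-membership of $g$ via an overspill argument on a suitable internal subset of ${}^*\nn$ that encodes the internal convergence $p(\tilde\zeta_n)\downarrow g$.

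For the order-boundedness assertion I would use the $\sigma$-op-continuity just obtained together with the $p$-completeness of the hull (via \cite[Thm.3.5]{E7}), the $p$-closedness of $\overline{\mathcal{Y}}^+$, and the sequential versions of Theorem \ref{thm 2}(ii) and Proposition \ref{prop 1}: a monotone $p$-bounded sequence is then $p$-Cauchy, hence $p$-convergent, hence $o$-convergent, and its order limit furnishes the required order bound. Alternatively, a direct saturation argument works: extend $([\xi_n])$ internally to an increasing $(\tilde\xi_n)_{n\in{}^*\nn}$, use $\aleph_1$-saturation on $\{p(\xi_n)\}_{n\in\nn}\subseteq\fin({}^*E)$ together with the $\overline{p}$-bound to select $K\in\fin({}^*E)$ dominating every $p(\xi_n)$ for standard $n$, and apply overspill to $\{n\in{}^*\nn:p(\tilde\xi_n)\le K\}$ to produce an infinite $N$ with $\tilde\xi_N\in\Lfin(\mathcal{Y})$, so that $[\tilde\xi_N]$ is the desired upper bound for $([\xi_n])$.
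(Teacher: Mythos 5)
Your reduction to ``$[\zeta_n]\downarrow 0$ implies $\overline{p}([\zeta_n])\downarrow 0$'' matches the paper, and the saturation extension of $(\zeta_n)$ to an internal decreasing sequence together with the conclusion $p(\tilde\zeta_N)\in\eta({}^*E)$ for every infinite $N$ is sound; but the crux, $f=\inf_n\overline{p}([\zeta_n])=0$, is exactly the step you leave open, and the plan you sketch does not close it. Membership of $g$ (or of $p(\tilde\zeta_N)$) in $\eta({}^*E)$ says only that the set of \emph{standard} majorants of $g$ has infimum $0$; those majorants need not dominate any $p(\zeta_n)$ with $n$ standard, so the fact that $f$ is a lower bound of each set $\{e\in E:e\ge p(\zeta_n)\}$ cannot be played off against them, and there is nothing internal for overspill to act on, since both $\eta({}^*E)$ and comparisons with standard majorants are external conditions. (The scalar intuition is misleading here: for $E=\rr$ one can underspill over $\{n:q_n<\varepsilon\}$ for standard $\varepsilon>0$, but in a general Dedekind complete $E$ an order infimum equal to $0$ is not witnessed by small majorants.) The paper's proof is genuinely different and avoids this issue: it never extends the sequence internally; assuming $\overline{p}(x_n)\ge u$ for some $0\ne u\in E_+$, it applies polysaturation to the internal sets $A_n=\{\chi\in\mathcal{Y}:2p(\chi)\ge u\ \&\ 0\le\chi\le\kappa_n\}$ and obtains $\chi$ with $0<[\chi]\le x_n$ for all $n$, contradicting $x_n\downarrow 0$ \emph{in the hull} $\overline{\mathcal{Y}}$. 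You discard the lattice $\mathcal{Y}$ too early and keep only norm information about infinite-index terms, which (at least without a further argument you do not supply) is too weak.

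For the second assertion, your first route is circular: Theorem \ref{thm 2}(ii) requires $0\le x_\alpha\uparrow\le x$, i.e.\ an order bound, which is precisely what is to be deduced from $p$-boundedness; moreover neither $p$-completeness nor any Levi-type property of $(\overline{\mathcal{Y}},\overline{p},E)$ is available here (the cited completeness result concerns hulls of standard decomposable LNSs). Your alternative saturation route has the right shape and is close to the paper's argument (which saturates over the sets $\{\chi:p(\chi)\le 2u\ \&\ \chi\le\kappa_n\}$), but the step producing $K\in\fin({}^*E)$ with $K\ge p(\xi_n)$ for all standard $n$ is unjustified: the constraint $K\in\fin({}^*E)$ is external, so $\aleph_1$-saturation cannot enforce it, and $\overline{p}([\xi_n])\le u$ bounds only the infimum of the standard majorants of $p(\xi_n)$, not the norms of your chosen representatives. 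What is needed --- and what the paper supplies via monotonicity of the lattice norm when it chooses representatives $\kappa_n$ with $p(\kappa_n)\le 2u$ --- is a choice of representatives whose norms admit a single standard bound; with that in hand, your overspill on $\{n\in{}^*\nn:p(\tilde\xi_n)\le K\}$, or the paper's finite-intersection argument directly, does finish the proof.
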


\begin{proof} 
First we show $\sigma-op$-continuity. Clearly, it is enough to show that $\overline{\mathcal{Y}}\ni x_n\downarrow 0$ implies $\overline{p}(x_n)\downarrow 0$.  
Suppose in contrary that $\overline{p}(x_n)\downarrow\ge u$ for all $n$ and some $0\ne u\in E_+$. The monotonicity of the lattice norm $p$ ensures existence of
a sequence $\kappa_n\downarrow$ in $\mathcal{Y}_+$ with $[\kappa_n]=x_n$ and $2p(\kappa_n)\ge u$ for all $n\in\nn$. Consider the sequence of nonempty internal sets
$$
  A_n=\{\chi\in\mathcal{Y}: 2p(\chi)\ge u \ \& \ 0\le\chi\le\kappa_n\} \ \ \ (n\in\nn).
$$
By saturation principle there exists $\chi\in\bigcap\limits_{n=1}^{\infty}A_n$. Then $0<[\chi]\le[\kappa_n]=x_n$ violating $x_n\downarrow 0$.

For the second part, let $\overline{\mathcal{Y}}\ni x_n\downarrow$ and $\overline{p}(x_n)\le u\in E$ for all $n\in\nn$. The monotonicity $p$ gives a sequence $\kappa_n\downarrow$ 
in $\mathcal{Y}$ with $[\kappa_n]=x_n$ and $p(\kappa_n)\le 2u$ for all $n\in\nn$. By the saturation principle there is $\chi\in\mathcal{Y}$ with $p(\chi)\le 2u$ and 
$\chi\le\kappa_n\le\kappa_1$ for all $n\in\nn$. Hence $x_n=[\kappa_n]\in\big[[\chi],[\kappa_1]\big]$ for all $n\in\nn$ what is required. 
\end{proof}

\subsection{Nonstandard criterion for $op$-continuity}
The following theorem generalizes \cite[Thm.4.5.3.]{E5}.

\begin{thm}\label{thm 7}
An LNVL $(X,p,E)$ is $op$-continuous iff $\eta({}^*X)\subseteq\Leta({}^*X)$.
\end{thm}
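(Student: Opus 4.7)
The plan is to leverage Theorem~\ref{thm 1}. Since any LNVL is $p$-semimonotone with constant $1$, Theorem~\ref{thm 1} reduces $op$-continuity of $(X,p,E)$ to the condition: $x_\alpha\downarrow 0$ in $X$ implies $x_\alpha\pc 0$. Moreover, because $p$ is a lattice norm, $p(x_\alpha)$ is itself decreasing in $E_+$, so (using Dedekind completeness of $E$) this condition is equivalent to $p(x_\alpha)\downarrow 0$ in $E$. Both directions will then be obtained by translating between an $X$-infimum witnessing membership in $\eta({}^*X)$ and the $E$-infimum witnessing membership in $\Leta({}^*X)$.

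For the $(\Rightarrow)$ direction, take $\kappa\in\eta({}^*X)$. Because $X$ is a vector lattice, the witnessing set $Y:=\{y\in X:-y\le\kappa\le y\}$ coincides with $\{y\in X_+:y\ge|\kappa|\}$ and is downward directed (closed under $\wedge$), with $\inf_XY=0$. Viewing $Y$ as a net indexed by itself, $y\downarrow 0$ in $X$, so by the reduced form of $op$-continuity, $p(y)\downarrow 0$ in $E$. Monotonicity of the lattice norm gives $p(\kappa)\le p(y)$ in ${}^*E$ for every $y\in Y$, whence $\{p(y):y\in Y\}\subseteq\{e\in E:e\ge p(\kappa)\}$. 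The infimum of the latter set in $E$ is then nonnegative and bounded above by $\inf_{y\in Y}p(y)=0$, giving $p(\kappa)\in\eta({}^*E)$, i.e., $\kappa\in\Leta({}^*X)$.

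For the $(\Leftarrow)$ direction, assume the inclusion and suppose for contradiction that some net $x_\alpha\downarrow 0$ in $X$ has $f:=\inf_\alpha p(x_\alpha)>0$ in $E$ (the infimum exists by Dedekind completeness). The strategy is to construct a single nonstandard witness $\kappa$ violating the assumed inclusion. Consider the internal sets $B_\alpha:=\{z\in{}^*X:0\le z\le{}^*x_\alpha,\ p(z)\ge f\}$; each $B_\alpha$ contains ${}^*x_\alpha$, and the family is downward directed along the index, so the polysaturation principle produces $\kappa\in\bigcap_\alpha B_\alpha$. Then $0\le\kappa\le{}^*x_\alpha$ for every standard $\alpha$ forces each $x_\alpha$ into the witnessing set for $\eta$, so $\inf\{y\in X:-y\le\kappa\le y\}=0$ and $\kappa\in\eta({}^*X)$; but $p(\kappa)\ge f$ with $f\in E$ yields $\inf_E\{e\in E:e\ge p(\kappa)\}\ge f\ne 0$, so $\kappa\notin\Leta({}^*X)$, contradicting the hypothesis.

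I expect the only delicate step to be precisely this saturation construction in the backward direction: bundling the upper bounds ${}^*x_\alpha$ together with the norm lower bound $p(z)\ge f$ into a single downward-directed family of nonempty internal sets. Once the witness $\kappa$ has been produced, the contradiction is immediate from unwinding the definitions of $\eta$ and $\Leta$, and the forward direction is essentially a rewriting of the order-theoretic infimum through the monotonicity of the lattice norm.
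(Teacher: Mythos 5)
Your proof is correct, and the forward direction is essentially the paper's argument (the paper picks a decreasing witnessing net $x_\alpha\downarrow 0$ with $0\le\kappa\le x_\alpha$ and uses $op$-continuity plus monotonicity of the norm; you work with the witnessing set $\{y\in X: -y\le\kappa\le y\}$ itself, which is the same idea). The backward direction, however, takes a genuinely different route. The paper starts from an arbitrary net $x_\alpha\oc 0$, transfers the sandwich $-y_\beta\le x_\alpha\le z_\gamma$, deduces that all remote terms ${}^*x_\alpha$ lie in $\eta({}^*X)\subseteq\Leta({}^*X)$, and then concludes $p(x_\alpha)\oc 0$ by appealing (implicitly) to the nonstandard criterion that a net whose remote terms are in $\eta({}^*E)$ is order null --- a fact not proved in the paper. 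You instead reduce to decreasing nets via Theorem \ref{thm 1} (legitimate, since an LNVL is $p$-semimonotone with constant $1$), argue by contradiction using Dedekind completeness of $E$ to produce $f=\inf_\alpha p(x_\alpha)\ne 0$, and then build a single internal witness $\kappa\in\bigcap_\alpha B_\alpha$ by polysaturation (the family $B_\alpha=\{z\in{}^*X: 0\le z\le{}^*x_\alpha,\ p(z)\ge f\}$ is indeed nonempty, internal, and downward directed, so the saturation step is sound, and it mirrors the technique used in Proposition \ref{prop 3}). What your approach buys is self-containedness: it avoids the unproved remote-term characterization of order convergence, at the price of an explicit saturation construction and of invoking Theorem \ref{thm 1} and the Dedekind completeness of the norming lattice; the paper's argument is shorter and treats general order-null nets directly without contraposition.
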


\begin{proof} 
Suppose that $(X,p,E)$ is $op$-continuous and fix $\kappa\in\eta({}^*X)$. 
Then, there exists a net $x_{\alpha}\in{}^*X$ such that $x_{\alpha}\downarrow 0$ and $0\le\kappa\le x_{\alpha}$. 
Clearly, $x_{\alpha}\oc 0$, so we have $x_{\alpha}\pc 0$ since $p$ is $op$-continuous. Since $0\le p(\kappa)\le p(x_{\alpha})$, 
it follows that $p(\kappa)\in\eta({}^*E)$ or $\kappa\in\Leta({}^*X)$. Hence $\eta({}^*X)\subseteq\Leta({}^*X)$.
  
Now suppose $\eta({}^*X)\subseteq\Leta({}^*X)$ and $X\ni x_{\alpha}\oc 0$. 
Then there are two nets $y_\beta\downarrow 0$ and $z_\gamma\downarrow 0$ in $X$ such that, for every $\beta$ and $\gamma$, there exists 
$\alpha_{\beta,\gamma}$ with
$$
  -y_\beta\le x_\alpha\le z_\gamma \ \ \ \ (\alpha\ge \alpha_{\beta,\gamma}). 
$$
Thus, $x_{\alpha}\in\eta({}^*X)$ for all infinitely large $\alpha$. So, by the hypothesis, $x_{\alpha}\in\Leta({}^*X)$ for all infinitely large $\alpha$.
Therefore, $ p(x_{\alpha})\rightarrow 0$ and $(X,p,E)$ is $op$-continuous.
\end{proof}

We finish with a discussion of $p$-Levi property. Let $(X,p,E)$ be $p$-Levi and $(x_{\alpha})_{\alpha\in A}$ be a monotone $p$-bounded net in $X$.
Then $x_{\alpha}\pc x$ for some $x\in X$. By the transfer principle, $x_{\alpha}\in\Lfin({}^*X)$ for all $\alpha\in{}^*A$. Given an infinitely large $\nu$. 
Since $x_{\alpha}\pc x$, then $x_{\nu}\in x+\Leta({}^*X)\subseteq\Lopns({}^*X)$. We do not know under which conditions on $(X,p,E)$ the converse is also true.

\end{document}